\newtheorem{theorem}[subsection]{Theorem}
\newtheorem*{theorem*}{Theorem}
\newtheorem{proposition}[subsection]{Proposition}
\newtheorem*{conjecture*}{Conjecture}
\newtheorem*{question*}{Question}
\theoremstyle{remark}
\newtheorem{remark}[subsection]{Remark}
\newtheorem{example}[subsection]{Example}
\theoremstyle{definition}
\newtheorem{definition}[subsection]{Definition}
\newtheorem*{notation*}{Notation}
\newcommand{\opname}[1]{\operatorname{\mathsf{#1}}}
\renewcommand{\mod}{\opname{mod}\nolimits}
\newcommand{\Hom}{\opname{Hom}}
\newcommand{\End}{\opname{End}}
\numberwithin{equation}{section}
\begin{document}

\title[]{Derived dimension via $\tau$-tilting theory}

\author{Yingying Zhang}

\thanks{MSC2010: 16E10, 16G10, 18E30}
\thanks{Key words: support $\tau$-tilting module, endomorphism algebra, derived dimension}
\address{Department of Mathematics, Huzhou University, Huzhou 313000, Zhejiang Province, P.R.China}

\email{yyzhang@zjhu.edu.cn}
\email{}

\begin{abstract}
Comparing the bounded derived categories of an algebra and of the endomorphism algebra of a given support $\tau$-tilting module, we find a relation between the derived dimensions of an algebra and of the endomorphism algebra of a given $\tau$-tilting module.
\end{abstract}

\maketitle

\section{Introduction}\label{s:introduction}
\medskip
The dimension of a triangulated category was introduced in [11], which is a measure of the complexity of the category. It also gives a new invariant for algebras and algebraic varieties under derived equivalences. The dimension of the bounded derived category of a finite-dimensional algebra is called the derived dimension of this algebra [6]. Note that there is a close relation between the derived dimension and some other important notions, such as Loewy length, global dimension and representation dimension. Especially, the derived dimension of a finite-dimensional algebra is finite [11]. Usually it is very difficult to give the precise value of the derived dimension of an algebra. Many algebraists investigated the upper bound of the derived dimension of an algebra, see [4, 7, 10, 11, 14].

Tilting theory aims at comparing the representation theory of an algebra with that of the endomorphism algebra of a tilting module over that algebra. Also tilting theory is a rich source of derived equivalences. $\tau$-tilting theory was introduced by Adachi-Iyama-Reiten, which completes tilting theory from the viewpoint of mutation [1]. Support $\tau$-tilting modules are very closely related with some other important notions, such as torsion classes, silting modules, silting complexes and cluster-tilting objects, see [1, 2] for details.

From [1] we have known that tilting modules are exactly faithful support $\tau$-tilting modules. Hence extending results from tilting theory to $\tau$-tilting theory increases the amount of algebras that can be compared to a given one. Treffinger in [13] compared the module categories of an algebra and of the endomorphism algebra of a given support $\tau$-tilting module, and then gave a generalization of tilting theorem in the framework of $\tau$-tilting theory. Suarez investigated the relation between the global dimensions of an algebra and of the endomorphism algebra of a $\tau$-tilting module [12].

In this paper, we devote to the comparison of the bounded derived category of a given algebra with the bounded derived category of the endomorphism algebra of a given support $\tau$-tilting module. Our first result is the following.
\begin{theorem}{\rm(Theorem 3.1)}
Let $A$ be a finite-dimensional algebra and $T$ a support $\tau$-tilting $A$-module, $B=\End_{A}(T)$ be its endomorphism algebra and $C=A/\mathrm{ann}T$ the factor algebra of $A$ modulo the annihilator of $T$.
\begin{itemize}
\item[(1)] The derived functors
$$\xymatrix{
\textbf{R}\Hom_{A}(T,-):D^{b}(\opname{mod}A)\ar@<1.0mm>[r] & D^{b}(\opname{mod}B):\textbf{L}(T\otimes_{B}-)\ar@<1.0mm>[l] &
}$$
are inverse triangle-equivalences if and only if $T$ is a tilting module.
\item[(2)] The derived functors
$$\xymatrix{
\textbf{R}\Hom_{A}(T,-):D^{b}(\opname{mod}C)\ar@<1.0mm>[r] & D^{b}(\opname{mod}B):\textbf{L}(T\otimes_{B}-)\ar@<1.0mm>[l] &
}$$
are inverse triangle-equivalences.
\end{itemize}
\end{theorem}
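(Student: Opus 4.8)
The plan is to reduce both statements to the classical derived tilting theorem of Happel, applied over $C$, where $T$ becomes a genuine tilting module. Two preliminary observations organise everything. First, $\End_{A}(T)=\End_{C}(T)=B$, because $\mathrm{ann}\,T$ acts as zero on $T$, so every $A$-linear endomorphism is already $C$-linear. Second, $T$ is a $C$-$B$-bimodule, so $T\otimes_{B}N$ is a $C$-module for every $N$; hence $\mathbf{L}(T\otimes_{B}-)$ takes values in the image of the restriction $D^{b}(\mod C)\hookrightarrow D^{b}(\mod A)$. I will use throughout the characterisation recalled from [1] that tilting modules are precisely the faithful support $\tau$-tilting modules.

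For (2), the first step is to check that $T$, regarded over $C$, is still support $\tau$-tilting and is now faithful, since $\mathrm{ann}_{C}T=0$; by [1] it is therefore a tilting $C$-module, so $\prdim_{C}T\le 1$, $\Ext^{1}_{C}(T,T)=0$, and $C$ admits a short $\add T$-coresolution. Happel's theorem then yields that $\mathbf{R}\Hom_{C}(T,-)$ and $\mathbf{L}(T\otimes_{B}-)$ are mutually inverse triangle-equivalences between $D^{b}(\mod C)$ and $D^{b}(\mod B)$. What remains is to match $\mathbf{R}\Hom_{C}(T,-)$ with the functor $\mathbf{R}\Hom_{A}(T,-)$ of the statement: on $\mod C$ the two agree in degree zero since $\Hom_{A}(T,M)=\Hom_{C}(T,M)$, and I would upgrade this to a derived isomorphism by showing $\Ext^{i}_{A}(T,M)\cong\Ext^{i}_{C}(T,M)$ for all $M\in\mod C$ and all $i$, so that a finite projective $C$-resolution of $T$ computes $\mathbf{R}\Hom_{A}(T,M)$ as well.

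This last identification is the main obstacle, for in general $D^{b}(\mod C)\hookrightarrow D^{b}(\mod A)$ is not full. I would control it with the change-of-rings spectral sequence $\Ext^{p}_{C}\!\big(T,\Ext^{q}_{A}(C,M)\big)\Rightarrow\Ext^{p+q}_{A}(T,M)$: since $\prdim_{C}T\le 1$ only the columns $p=0,1$ survive, so the whole discrepancy between $\Ext_{A}$ and $\Ext_{C}$ is measured by the $C$-modules $\Ext^{q}_{A}(C,M)$ with $q\ge 1$. The plan is to prove these vanish for $M\in\mod C$ by examining $\mathrm{ann}\,T$ as a left $A$-module coming from the support $\tau$-tilting datum, which I expect to be homologically orthogonal to $\mod C$, so that $\mathbf{R}\Hom_{A}(\mathrm{ann}\,T,M)=0$ and hence $\mathbf{R}\Hom_{A}(C,M)\cong M$ for $M\in\mod C$. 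Securing this vanishing in general is the delicate heart of the argument.

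For (1), the forward implication is the classical tilting theorem applied directly to the tilting $A$-module $T$ (equivalently, $T$ is faithful so $C=A$ and (2) already gives the equivalence over $A$). For the converse, suppose the two functors are inverse triangle-equivalences on $D^{b}(\mod A)$. The counit then gives an isomorphism $\mathbf{L}\big(T\otimes_{B}\mathbf{R}\Hom_{A}(T,A)\big)\xrightarrow{\ \sim\ }A$ in $D^{b}(\mod A)$; but by the second preliminary observation the left-hand side is isomorphic to a complex of $C$-modules, whose cohomology is therefore built from $C$-modules. Comparing zeroth cohomology forces $A$ itself to be a $C$-module, i.e. $\mathrm{ann}\,T=\mathrm{ann}\,T\cdot 1=0$. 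Thus $T$ is faithful, hence a tilting module by [1], which closes the equivalence with ``tilting''.
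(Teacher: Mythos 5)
Part (1) of your proposal is correct, and your converse argument is in fact more informative than the paper's: the paper simply cites Happel for the full ``if and only if,'' whereas you extract faithfulness of $T$ directly from the counit isomorphism $\mathbf{L}\bigl(T\otimes_{B}\mathbf{R}\Hom_{A}(T,A)\bigr)\cong A$ together with the observation that the left-hand side is represented by a complex of $C$-modules, and then invoke ``faithful support $\tau$-tilting $=$ tilting.'' That is a clean and valid route.

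Part (2) has a genuine gap, and the step you yourself flag as the ``delicate heart'' cannot be repaired in the form you propose. The vanishing $\Ext^{q}_{A}(C,M)=0$ for $q\geq 1$ and $M\in\mod C$ is false; the paper's own Example 3.4 refutes it. There $A$ is radical-square-zero on the quiver $1\xrightarrow{\alpha}2$ with a loop $\beta$ at $2$, $T=P_{1}\oplus S_{1}$, and $C=A/\langle\beta\rangle$ is hereditary of type $A_{2}$. The $C$-progenerator contains the summand $S_{2}$, and $\Omega_{A}S_{2}\cong S_{2}$, so $\Ext^{q}_{A}(C,S_{2})\neq 0$ for all $q\geq 1$; moreover $\Ext^{q}_{A}(T,S_{2})\cong\Ext^{1}_{A}(S_{2},S_{2})\neq 0$ for all $q\geq 2$, while $\Ext^{\geq 2}_{C}$ vanishes. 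Hence the literal reading of $\mathbf{R}\Hom_{A}(T,-)$ on $D^{b}(\mod C)$ --- restricting the derived functor computed over $A$ --- does not even land in $D^{b}(\mod B)$ and cannot give the stated equivalence. The statement only makes sense, and the paper only proves it, under the convention that on $D^{b}(\mod C)$ one derives the functor $\Hom_{A}(T,-)|_{\mod C}=\Hom_{C}(T,-)$ using resolutions in $\mod C$. With that convention the first paragraph of your treatment of (2) --- $T$ is a faithful support $\tau$-tilting, hence tilting, $C$-module with $\End_{C}(T)=\End_{A}(T)=B$, now apply Happel over $C$ --- is already the complete proof and coincides with the paper's. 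The correct fix is therefore to delete the spectral-sequence comparison, not to try to complete it.
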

Applying the above result, we find a relation between the derived dimensions of an algebra and of the endomorphism algebra of a given $\tau$-tilting module.
\begin{theorem}{\rm(Theorem 3.2)}
Let $A$ be a finite-dimensional algebra and $T$ a $\tau$-tilting $A$-module, $B=\End_{A}(T)$ be its endomorphism algebra. Then we have $\mathrm{der.dim}(A)\leqslant r(1+\mathrm{der.dim}(B))-1$, where the annihilator of $T$ is a nilpotent ideal with $(\mathrm{ann}T)^r=0$.
\end{theorem}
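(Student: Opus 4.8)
The plan is to split the argument into two independent parts: a derived-invariance reduction that replaces $B$ by $C=A/\mathrm{ann}T$, and a purely triangulated estimate for the passage from $C$ to $A$ across the nilpotent ideal $I=\mathrm{ann}T$.

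First I would record that $T$, being a $\tau$-tilting module, is in particular a support $\tau$-tilting module, so Theorem 3.1(2) applies and yields an inverse pair of triangle equivalences between $D^b(\mod C)$ and $D^b(\mod B)$. Since the dimension of a triangulated category, and hence the derived dimension, is invariant under triangle equivalences, this gives $\mathrm{der.dim}(C)=\mathrm{der.dim}(B)$. It therefore suffices to prove $\mathrm{der.dim}(A)\le r(1+\mathrm{der.dim}(C))-1$, and the rest of the argument never mentions $B$, $T$ or $\tau$-tilting theory again: it uses only that $C=A/I$ with $I=\mathrm{ann}T$ satisfying $I^r=0$.

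Set $d=\mathrm{der.dim}(C)$ and choose $G\in D^b(\mod C)$ with $\langle G\rangle_{d+1}=D^b(\mod C)$, where $\langle-\rangle_n$ denotes Rouquier's $n$-step generation. The exact inflation functor $\mod C\to\mod A$ (a short exact sequence of $C$-modules is one of $A$-modules) induces a triangle functor $\iota\colon D^b(\mod C)\to D^b(\mod A)$, and since triangle functors respect $n$-step generation we have $\iota(\langle G\rangle_{n})\subseteq\langle\iota G\rangle_{n}$ for all $n$; I will use $\iota G$ as the candidate generator of $D^b(\mod A)$. The key construction is to filter not by cohomology but by powers of $I$ at the chain level. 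Represent an arbitrary $X\in D^b(\mod A)$ by a bounded complex of finitely generated $A$-modules; applying $I$ degreewise produces subcomplexes
$$X\supseteq IX\supseteq I^2X\supseteq\cdots\supseteq I^rX=0,$$
whose successive quotients $Y_j=I^jX/I^{j+1}X$ are bounded complexes annihilated by $I$, i.e. objects of $D^b(\mod C)$. Each short exact sequence $0\to I^{j+1}X\to I^jX\to Y_j\to0$ is a distinguished triangle in $D^b(\mod A)$, and each $Y_j$ lies in $\langle\iota G\rangle_{d+1}$ by the previous sentence. Splicing the $r$ triangles and using $\langle\iota G\rangle_a*\langle\iota G\rangle_b\subseteq\langle\iota G\rangle_{a+b}$ shows $X\in\langle\iota G\rangle_{r(d+1)}$. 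As $X$ was arbitrary, $D^b(\mod A)=\langle\iota G\rangle_{r(d+1)}$, whence $\mathrm{der.dim}(A)\le r(d+1)-1=r(1+\mathrm{der.dim}(B))-1$.

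The step I expect to be the main obstacle, and the one worth spelling out carefully, is obtaining a number of generation steps that is independent of the cohomological amplitude of $X$. Filtering $X$ by its truncations $\tau_{\le i}X$ would reintroduce one factor of $(d+1)$ per nonzero cohomology and so would not yield a uniform bound; the remedy is precisely to filter the chosen bounded representative by the powers of $I$, which costs exactly $r$ steps regardless of how long $X$ is. The remaining points to verify are routine: that $I^jX$ is indeed a subcomplex (the differentials are $A$-linear, so $d(I^jX^i)\subseteq I^jX^{i+1}$), that each $Y_j$ is a genuine object of $D^b(\mod C)$, and the elementary additivity $\langle G\rangle_a*\langle G\rangle_b\subseteq\langle G\rangle_{a+b}$ of Rouquier's filtration.
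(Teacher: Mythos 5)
Your proof is correct and follows essentially the same route as the paper: reduce to $C=A/\mathrm{ann}T$ via Theorem 3.1(2) and then apply the nilpotent-ideal bound, except that where the paper simply cites Proposition 2.5 (Rouquier's lemma) you reprove it by the standard filtration of a bounded complex by powers of $I$, which is exactly the argument behind that citation. The only small omission is that the statement's claim that $\mathrm{ann}T$ is nilpotent still needs the one-line observation that a $\tau$-tilting module is sincere, so its annihilator contains no idempotent and lies in the radical, which is how the paper justifies the existence of $r$.
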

\begin{notation*}
Let $K$ be an algebraically closed field and $A$ a finite-dimensional basic $K$-algebra. We denote by $\opname{mod}A$ the category of finitely generated right $A$-modules, and by $D^{b}(\opname{mod}A)$ the bounded derived category. The Auslander-Reiten translation of $A$ is denoted by $\tau$. The annihilator of $A$-module $T$ is denoted by $\mathrm{ann}T$.
\end{notation*}

\section{Preliminaries}
\medskip
In this section, we collect some basic materials that will be used later.
\vspace{0.2cm}

{\bf 2.1. $\tau$-titing theory}

\vspace{0.2cm}

First we recall the definition of support $\tau$-tilting modules from [1].
\begin{definition}
Let $X \in \mod A$.
\begin{itemize}
\item[(1)] We call $X$ in $\mod A$ $\tau$-$rigid$ if  ${\opname{Hom}}_{A}(X, \tau X)$=0.
\item[(2)] We call $X$ in $\mod A$ $\tau$-$tilting$ if $X$ is $\tau$-rigid and $ |X| = |A|$, where $ |X|$ denotes the number of nonisomorphic indecomposable direct summands of $X$.
\item[(3)] We call $X$ in $\mod A$ $support$ $\tau$-$tilting$ if there exists an idempotent e of $A$ such that $X$ is a $\tau$-tilting ($A/\langle e\rangle$)-module.
\end{itemize}
\end{definition}
We recall some properties of support $\tau$-tilting modules associated with $\tau$-tilting modules and tilting modules.
\begin{proposition}\rm ([1, Proposition 2.2])
\begin{itemize}
\item[(1)] $\tau$-tilting modules are precisely sincere support $\tau$-tilting modules.
\item[(2)] Tilting modules are precisely faithful support $\tau$-tilting modules.
\item[(3)] Any $\tau$-tilting $A$-module $T$ is a tilting $C$-module, where $C=A/\mathrm{ann}T$.
\end{itemize}
\end{proposition}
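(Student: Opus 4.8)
The plan is to reduce all three assertions to a few structural facts about $\tau$-rigid modules, and to isolate a single core lemma that carries the real content. The facts I take as available (all standard, and implicit in [1]) are: (i) the Auslander--Smal\o{} characterization that $X\in\mod A$ is $\tau$-rigid if and only if $\Ext^1_A(X,\operatorname{Fac}X)=0$, where $\operatorname{Fac}X$ denotes the factor modules of objects of $\add X$; (ii) the bound $|X|\leq|A|$ for every $\tau$-rigid $X$, with equality exactly when $X$ is $\tau$-tilting; and (iii) the behaviour under a quotient map $A\to A/I$: if $U,V$ are $A/I$-modules then any $A$-linear map between them is $A/I$-linear, so an $A/I$-extension that splits over $A$ already splits over $A/I$, whence the restriction map $\Ext^1_{A/I}(U,V)\hookrightarrow\Ext^1_A(U,V)$ is injective; since moreover $\operatorname{Fac}_{A/I}U=\operatorname{Fac}_AU$ for an $A/I$-module $U$, fact (i) shows that $\tau$-rigidity over $A$ descends to $\tau$-rigidity over any quotient $A/I$ on which the module lives.

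For part (1), I would first prove that a $\tau$-tilting module $T$ is sincere. If it were not, some primitive idempotent $e_0$ would satisfy $Te_0=0$, so $T$ would be a module over $\bar A:=A/\langle e_0\rangle$, an algebra with $|\bar A|<|A|$. By (iii) $T$ stays $\tau_{\bar A}$-rigid, and its indecomposable summands stay pairwise non-isomorphic (as $\mod\bar A$ is a full subcategory of $\mod A$); hence $|T|=|A|>|\bar A|$ contradicts the bound (ii) over $\bar A$. Conversely, if $X$ is sincere and support $\tau$-tilting, say $\tau$-tilting over $\bar A=A/\langle e\rangle$, then $e\neq 0$ would give a primitive $e_0\leq e$ with $Xe_0\subseteq X\langle e\rangle=0$, contradicting sincerity; so $e=0$ and $X$ is $\tau$-tilting over $A$. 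Together with the trivial observation that a $\tau$-tilting module is support $\tau$-tilting, this gives (1). Note that faithful implies sincere, so this already settles one half of each direction needed below.

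The engine for (2) and (3) is the core lemma: a faithful $\tau$-tilting module is a classical ($\operatorname{pd}\leq1$) tilting module. To set it up I would use the canonical left $\add T$-approximation sequence $A\xrightarrow{f}T^0\to T^1\to0$ attached to a $\tau$-tilting module, with $T^0,T^1\in\add T$. Identifying $f$ with $t_0:=f(1)\in T^0$, the approximation property forces $\Hom_A(T^0,T)\cdot t_0=T$, and a short calculation gives $\ker f=\{a:t_0a=0\}=\mathrm{ann}T$; thus $f$ is injective exactly when $T$ is faithful, turning the sequence into a genuine short exact sequence $0\to A\to T^0\to T^1\to0$ with terms in $\add T$. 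Since $T\in\operatorname{Fac}T$, fact (i) gives $\Ext^1_A(T,T)=0$, and $|T|=|A|$; so a faithful $\tau$-tilting module meets all hypotheses of the classical tilting characterization except possibly $\operatorname{pd}_AT\le1$. Granting the core lemma, (3) follows by passing to $C=A/\mathrm{ann}T$: by (iii) $T$ is $\tau_C$-rigid, and it is faithful over $C$ by construction; its number of summands is still $|A|$, so $|A|\le|C|\le|A|$ forces $|T|=|C|$, i.e.\ $T$ is $\tau_C$-tilting, and the core lemma over $C$ makes it a tilting $C$-module. For (2), a faithful support $\tau$-tilting module is sincere, hence $\tau$-tilting by (1), hence (being faithful, so $C=A$) a tilting module by the core lemma; conversely a classical tilting module is faithful (from its own sequence $0\to A\to T'\to T''\to0$), is $\tau$-rigid (since $\operatorname{pd}T\le1$ and $\Ext^1_A(T,T)=0$), and has $|T|=|A|$, hence is a faithful $\tau$-tilting, in particular faithful support $\tau$-tilting, module.

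The main obstacle is precisely the last clause of the core lemma: upgrading the conclusion to $\operatorname{pd}_AT\le1$. The short exact sequence and the $\Ext$-vanishing above only supply the coresolution and self-orthogonality conditions of classical tilting together with the summand count, and for a general $\tau$-tilting module these do not force finite projective dimension (e.g.\ over a self-injective algebra). The route I would take is to pass to the two-term complex given by a minimal projective presentation $P^{-1}\xrightarrow{p}P^0\to T\to0$: for a $\tau$-tilting module this is a two-term silting complex, and $\operatorname{pd}_AT\le1$ is equivalent to $H^{-1}(P^{-1}\to P^0)=\ker p=0$. One then shows that faithfulness of $T=H^0$ forces $\ker p=0$, equivalently that $\operatorname{Fac}T$ is a cogenerating (hence tilting) torsion class. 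This is where the substantive input of [1, Proposition 2.2] lies; the remaining steps above are bookkeeping with the bound (ii) and the descent property (iii).
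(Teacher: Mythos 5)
The paper offers no proof of this proposition at all --- it is quoted verbatim from [1, Proposition 2.2] --- so the only meaningful comparison is with the argument in that source. Your reductions are essentially the ones used there, and they are correct: the descent of $\tau$-rigidity along $A\to A/I$ via the Auslander--Smal\o{} criterion $\Hom_A(X,\tau X)=0\Leftrightarrow\Ext^1_A(X,\operatorname{Fac}X)=0$, the summand bound $|X|\le|A|$, the identification $\ker f=\mathrm{ann}\,T$ for the left $\add T$-approximation $A\to T^0$, and the derivation of (2) and (3) from (1) together with your ``core lemma'' that a faithful $\tau$-rigid module has projective dimension at most one.

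That core lemma, however, is exactly the point you do not prove. Your final paragraph replaces it by ``one then shows that faithfulness of $T$ forces $\ker p=0$,'' which is the claim to be proved restated in terms of the minimal presentation; no argument is given, and the detour through two-term silting complexes does not make the implication any easier. The gap is genuine but small, and the standard argument (Auslander--Smal\o{}, Proposition 5.8, which is precisely what [1] cites at this point) closes it in three lines: a module $M$ satisfies $\operatorname{pd}_AM\le 1$ if and only if $\Hom_A(DA,\tau M)=0$; if $T$ is faithful then $A$ is cogenerated by $T$, equivalently $DA$ is an epimorphic image of some $T^{d}$, so $\Hom_A(DA,\tau T)$ embeds into $\Hom_A(T,\tau T)^{d}=0$ by $\tau$-rigidity. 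With that inserted your proof is complete and has the same architecture as the source. Two smaller points you should also make explicit: fact (ii), the bound $|X|\le|A|$ for $\tau$-rigid $X$, is itself a nontrivial theorem of [1] rather than bookkeeping; and in the converse direction of (2) the $\tau$-rigidity of a classical tilting module needs the Auslander--Reiten formula $\Hom_A(T,\tau T)\cong D\Ext^1_A(T,T)$, valid because $\operatorname{pd}T\le1$.
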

\vspace{0.2cm}

{\bf 2.2. Derived dimension}

\vspace{0.2cm}
We recall some notions from [9, 10, 11].
Let $\mathcal {T}$ be a triangulated category and $\mathcal {I} \subseteq {\rm Ob}\mathcal {T}$.
Let $\langle \mathcal {I} \rangle$ be the smallest full subcategory of $\mathcal {T}$ containing $\mathcal {I}$ and closed under finite direct sums, direct summands and shifts. Given two subclasses $\mathcal {I}_{1}, \mathcal {I}_{2}\subseteq {\rm Ob}\mathcal {T}$, we denote $\mathcal {I}_{1}*\mathcal {I}_{2}$
by the full subcategory of all extensions between them, that is,
$$\mathcal {I}_{1}*\mathcal {I}_{2}=\{ X\mid  X_{1} \longrightarrow X \longrightarrow X_{2}\longrightarrow X_{1}[1]\;
{\rm with}\; X_{1}\in \mathcal {I}_{1}\; {\rm and}\; X_{2}\in \mathcal {I}_{2}\}.$$
Write $\mathcal {I}_{1}\diamond\mathcal {I}_{2}:=\langle\mathcal {I}_{1}*\mathcal {I}_{2} \rangle.$
Then $(\mathcal {I}_{1}\diamond\mathcal {I}_{2})\diamond\mathcal {I}_{3}=\mathcal {I}_{1}\diamond(\mathcal {I}_{2}\diamond\mathcal {I}_{3})$
for any subclasses $\mathcal {I}_{1}, \mathcal {I}_{2}$ and $\mathcal {I}_{3}$ of $\mathcal {T}$ by the octahedral axiom.
Write
\begin{align*}
\langle \mathcal {I} \rangle_{0}:=0,\;
\langle \mathcal {I} \rangle_{1}:=\langle \mathcal {I} \rangle\; {\rm and}\;
\langle \mathcal {I} \rangle_{n+1}:=\langle \mathcal {I} \rangle_{n}\diamond \langle \mathcal {I} \rangle_{1}\;{\rm for\; any \;}n\geqslant 1.
\end{align*}

\begin{definition}{\rm ([11, Definition 3.2])}
The {\bf dimension} $\dim \mathcal {T}$ of a triangulated category $\mathcal {T}$
is the minimal $d$ such that there exists an object $M\in \mathcal {T}$ with
$\mathcal {T}=\langle M \rangle_{d+1}$. If no such $M$ exists for any $d$, then we set $\dim \mathcal {T}=\infty.$
\end{definition}
Now we give the definition of derived dimension of an algebra.
\begin{definition}{\rm ([6])}
The derived dimension of $A$ is the dimension of the triangulated category $D^{b}(\opname{mod}A)$, denoted by $\mathrm{der.dim}(A)$.
\end{definition}
A finite-dimensional algebra $A$ is said to be $derived$ $finite$ if up to shift and isomorphism there is only a finite number of indecomposable objects in $D^{b}(\opname{mod}A)$ [5, Definition 1]. Clearly, $\mathrm{der.dim}(A)=0$ if and only if $A$ is derived finite. By [6], a finite-dimensional algebra over an algebraically closed field is derived finite if and only if it is an iterated tilted algebra of Dynkin type.

The following result establishes a relation between derived dimensions of an algebra and of its factor algebra.
\begin{proposition}{\rm ([10, Lemma 7.35])}
Let $A$ be an algebra and $I$ a nilpotent ideal of $A$ with $I^{r}=0$. We have $\mathrm{der.dim}(A)\leqslant r(1+\mathrm{der.dim}(A/I))-1$.
\end{proposition}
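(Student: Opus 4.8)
The plan is to build every object of $D^{b}(\mod A)$ by iterated extensions of objects that come from the factor algebra $\bar{A}:=A/I$, while controlling the number of extension steps by the nilpotency index $r$ and by $\mathrm{der.dim}(\bar A)$. Set $d=\mathrm{der.dim}(\bar A)$; if $d=\infty$ the inequality is vacuous, so I would assume $d<\infty$ and fix a generator $\bar G\in D^{b}(\mod\bar A)$ realizing the dimension, i.e. $D^{b}(\mod\bar A)=\langle\bar G\rangle_{d+1}$. The projection $A\to\bar A$ induces an exact restriction-of-scalars functor, hence a triangle functor $\rho\colon D^{b}(\mod\bar A)\to D^{b}(\mod A)$; write $G:=\rho(\bar G)$. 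Since $\rho$ preserves shifts, finite direct sums, direct summands and distinguished triangles, it carries $\langle\bar G\rangle_{n}$ into $\langle G\rangle_{n}$ for every $n$ (one checks $\rho(\mathcal I_1*\mathcal I_2)\subseteq\rho(\mathcal I_1)*\rho(\mathcal I_2)$ and inducts on $n$). In particular, any bounded complex of $\bar A$-modules, viewed through $\rho$ as an object of $D^{b}(\mod A)$, lies in $\langle G\rangle_{d+1}$.

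The key step is the $I$-adic filtration applied at the level of complexes rather than modules. Given $X\in D^{b}(\mod A)$, I would fix a bounded complex of $A$-modules representing it and form the termwise submodules $XI^{j}$. As the differentials are $A$-linear they preserve each $XI^{j}$, so these are subcomplexes and yield a finite filtration
$$0=XI^{r}\subseteq XI^{r-1}\subseteq\cdots\subseteq XI\subseteq XI^{0}=X,$$
whose $r$ consecutive quotients $XI^{j}/XI^{j+1}$ (for $0\leqslant j\leqslant r-1$) are annihilated by $I$ and are therefore complexes of $\bar A$-modules. By the first paragraph, each such quotient lies in $\langle G\rangle_{d+1}$ inside $D^{b}(\mod A)$.

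To finish, each inclusion $XI^{j+1}\hookrightarrow XI^{j}$ is part of a short exact sequence of complexes $0\to XI^{j+1}\to XI^{j}\to XI^{j}/XI^{j+1}\to 0$, which gives a distinguished triangle in $D^{b}(\mod A)$. Assembling the $r$ triangles of the filtration, and using associativity of $\diamond$ together with the identity $\langle G\rangle_{a}\diamond\langle G\rangle_{b}=\langle G\rangle_{a+b}$, an induction on the filtration length places $X$ in $\langle G\rangle_{r(d+1)}$. Since $\langle G\rangle_{r(d+1)}$ is closed under isomorphism, this is independent of the chosen representative, and as $X$ was arbitrary we get $D^{b}(\mod A)=\langle G\rangle_{r(d+1)}$. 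Hence $\mathrm{der.dim}(A)=\dim D^{b}(\mod A)\leqslant r(d+1)-1=r\bigl(1+\mathrm{der.dim}(A/I)\bigr)-1$.

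I expect the main obstacle to be not the counting but the two functorial bookkeeping points: one must run the filtration on a genuine complex, so that the number of nonzero cohomology degrees of $X$ does not contribute an extra factor to the generation index, and one must verify cleanly that $\rho$ transports a $(d+1)$-step generation by $\bar G$ into a $(d+1)$-step generation by $G$. Once both are settled, the diamond arithmetic giving the factor $r$ is routine.
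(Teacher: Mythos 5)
Your argument is correct. The paper itself gives no proof of this proposition --- it is simply quoted from Rouquier --- and your argument (restriction of scalars transporting a generator of $D^{b}(\opname{mod}A/I)$, the termwise $I$-adic filtration of a representing complex with $r$ subquotients killed by $I$, and the diamond arithmetic $\langle G\rangle_{a}\diamond\langle G\rangle_{b}=\langle G\rangle_{a+b}$ giving the factor $r$) is precisely the standard proof of the cited lemma, with the two bookkeeping points you flag handled correctly.
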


\vspace{0.2cm}
\section{Main results}
\medskip
Given an algebra $A$ and a support $\tau$-tilting $A$-module $T$, we consider the algebras $B=\End_{A}(T)$ and $C=A/\mathrm{ann}T$. Treffinger in [13] compared the module categories of $A$ and $B$, and gave a generalization of the Brenner-Butler's tilting theorem in the framework of $\tau$-tilting theory. We replace the module categories by their derived categories, and adapt the conditions accordingly, we obtain the following statement.
\begin{theorem}
Let $T$ be a support $\tau$-tilting $A$-module, $B=\End_{A}(T)$ be its endomorphism algebra and $C=A/\mathrm{ann}T$.
\begin{itemize}
\item[(1)] The derived functors
$$\xymatrix{
\textbf{R}\Hom_{A}(T,-):D^{b}(\opname{mod}A)\ar@<1.0mm>[r] & D^{b}(\opname{mod}B):\textbf{L}(T\otimes_{B}-)\ar@<1.0mm>[l] &
}$$
are inverse triangle-equivalences if and only if $T$ is a tilting module.
\item[(2)] The derived functors
$$\xymatrix{
\textbf{R}\Hom_{A}(T,-):D^{b}(\opname{mod}C)\ar@<1.0mm>[r] & D^{b}(\opname{mod}B):\textbf{L}(T\otimes_{B}-)\ar@<1.0mm>[l] &
}$$
are inverse triangle-equivalences.
\end{itemize}
\end{theorem}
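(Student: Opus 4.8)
The plan is to deduce both parts from the classical tilting theorem of Happel: if $U$ is a (classical) tilting module over an algebra $\Lambda$ with $\Gamma=\End_{\Lambda}(U)$, then $\mathbf{R}\Hom_{\Lambda}(U,-)\colon D^{b}(\mod\Lambda)\to D^{b}(\mod\Gamma)$ is a triangle-equivalence with quasi-inverse $\mathbf{L}(U\otimes_{\Gamma}-)$. Granting this, the ``if'' direction of (1) is immediate: a tilting $A$-module $T$ is exactly the input to Happel's theorem with $\Lambda=A$.

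The content of (1) is the converse: assuming the two functors are inverse triangle-equivalences, I must show $T$ is tilting. By [1, Proposition 2.2(2)] tilting modules are precisely the faithful support $\tau$-tilting modules, so it suffices to prove $I:=\mathrm{ann}_{A}T=0$. The key point is a support constraint on $\mathbf{L}(T\otimes_{B}-)$. Since the $A$-action on $T$ factors through $C=A/I$, the module $T$ is annihilated by $I$; as tensoring over $B$ with a projective $B$-module yields a direct summand of a finite sum of copies of $T$, every term of $T\otimes_{B}Q_{\bullet}$ (for a projective resolution $Q_{\bullet}$ of an object of $\mod B$) is again annihilated by $I$. Hence every object in the essential image of $\mathbf{L}(T\otimes_{B}-)$ has all of its cohomology in $\mod C$, i.e. lies in the thick subcategory $D^{b}_{C}(\mod A)\subseteq D^{b}(\mod A)$ of complexes with $C$-module cohomology (a triangulated subcategory, since $\mod C$ is a Serre subcategory of $\mod A$). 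If the functor is essentially surjective we get $D^{b}_{C}(\mod A)=D^{b}(\mod A)$; in particular the regular module $A_{A}$ lies in $D^{b}_{C}(\mod A)$, forcing $I\subseteq\mathrm{ann}_{A}(A_{A})=0$. Thus $T$ is faithful, hence tilting, which finishes (1).

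For (2) I would apply part (1) to the algebra $C$ rather than $A$. By [1, Proposition 2.2(3)] (for a merely support $\tau$-tilting $T$ one first restricts to the idempotent quotient over which $T$ is $\tau$-tilting, which has the same $C$) the module $T$ is a tilting $C$-module, and because the $A$-action on $T$ factors through $C$ we have $\End_{C}(T)=\End_{A}(T)=B$ together with $\Hom_{A}(T,N)=\Hom_{C}(T,N)$ for every $N\in\mod C$. Reading the ``if'' direction of (1) over $C$ then shows that $\mathbf{R}\Hom_{C}(T,-)$ and $\mathbf{L}(T\otimes_{B}-)$ are inverse triangle-equivalences $D^{b}(\mod C)\simeq D^{b}(\mod B)$, which is the assertion.

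I expect two delicate points. The genuine mathematical obstacle is the converse of (1): the cohomological-support argument above is what I would rely on, since recognising $T$ as a tilting module by computing $\Ext$-groups directly is awkward, whereas essential surjectivity of $\mathbf{L}(T\otimes_{B}-)$ pins down faithfulness cleanly. The second, more bookkeeping point, is that in (2) the functors $\mathbf{R}\Hom_{A}(T,-)$ and $\mathbf{R}\Hom_{C}(T,-)$ do not agree on all of $D^{b}(\mod C)$ once $I\neq0$ (their higher $\Ext$ differ through the change-of-rings spectral sequence); they coincide only on $\Hom$ in degree zero, which is exactly why presenting (2) as an instance of (1) over $C$—so that the equivalence is the one intrinsic to $C$—is the correct way to justify the displayed statement.
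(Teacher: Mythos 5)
Your proposal is correct, and part (2) coincides with the paper's argument: both invoke Proposition 2.2(3) to see that $T$ is a tilting $C$-module, identify $\End_{C}(T)\cong\End_{A}(T)=B$, and apply Happel's theorem over $C$. (Your closing remark that the functor on $D^{b}(\mod C)$ should really be read as $\mathbf{R}\Hom_{C}(T,-)$ rather than the literal $\mathbf{R}\Hom_{A}(T,-)$ is a fair observation about the notation; the paper glosses over it.) Where you genuinely diverge is the converse direction of (1). The paper disposes of it in one line by citing [8] for the full biconditional ``the derived functors are inverse equivalences if and only if $T$ is a tilting module of finite projective dimension,'' and then uses the faithfulness of such a module together with Proposition 2.2(2) to land on a classical tilting module. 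You instead use only the easy ``if'' direction of Happel's theorem and extract faithfulness directly from essential surjectivity: every value of $\mathbf{L}(T\otimes_{B}-)$ is isomorphic to a complex of modules killed by $I=\mathrm{ann}\,T$, hence has cohomology in $\mod C$, so surjectivity onto the stalk complex $A_{A}$ forces $I=0$, and Proposition 2.2(2) finishes. Your route costs a short extra argument but is self-contained and avoids leaning on the strong ``only if'' half of derived Morita theory (which is really Rickard's theorem rather than something proved in Happel's paper, so the paper's citation is doing more work than it admits); the paper's route is shorter but less careful about what [8] actually supplies. Both arguments are sound and both funnel through Proposition 2.2(2) at the end.
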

\begin{proof}
(1) By [8], the derived functors $\textbf{R}\Hom_{A}(T,-)$ and $\textbf{L}(T\otimes_{B}-)$ are inverse triangle-equivalences if and only if $T$ is a tilting module of finite projective dimension. Thus $T$ is a faithful support $\tau$-tilting module. It follows that $T$ is a tilting module by Proposition 2.2(2).

(2) Because of Proposition 2.2(3) and [1, Proposition 1.1 and Theorem 2.7], $T$ is a $C$-tilting module. Moreover since $\mod C$ is a full subcategory of $\mod A$, we have that $\End_{C}(T)\cong \End_{A}(T)=B$. Therefore it follows from [8] that $C$ is derived equivalent to $B$.
\end{proof}

Assume that $T$ is a $\tau$-tilting module, Suarez investigated the relation between the global dimensions of $A$ and of $B$ in [12]. Here we replace the global dimensions by their derived dimensions and get the following result.
\begin{theorem}
Let $A$ be an algebra and $T$ a $\tau$-tilting $A$-module, $B=\End_{A}(T)$ be its endomorphism algebra. Then we have $\mathrm{der.dim}(A)\leqslant r(1+\mathrm{der.dim}(B))-1$, where $\mathrm{ann}T$ is a nilpotent ideal with $(\mathrm{ann}T)^r=0$.
\end{theorem}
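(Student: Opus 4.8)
The plan is to reduce the statement to two facts already available: the derived equivalence of Theorem 3.1(2) and the radical-annihilator bound of Proposition 2.5. Once the hypotheses of those two results are verified, the inequality falls out by substitution, so the only genuine work lies in checking that $\mathrm{ann}T$ is nilpotent and that derived dimension transports across the equivalence.

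First I would justify that $\mathrm{ann}T$ is a nilpotent ideal, which is what pins down the index $r$. Since $T$ is a $\tau$-tilting module, Proposition 2.2(1) shows that $T$ is sincere, so every simple $A$-module occurs as a composition factor of $T$. The ideal $\mathrm{ann}T$ annihilates $T$, hence it annihilates every submodule and every subquotient of $T$, in particular every composition factor. Thus $\mathrm{ann}T$ is contained in the annihilator of each simple $A$-module, and therefore in the intersection of all these annihilators, which is precisely the Jacobson radical $\rad A$. As $A$ is finite-dimensional, $\rad A$ is nilpotent, whence $\mathrm{ann}T$ is nilpotent; I let $r$ be the least integer with $(\mathrm{ann}T)^r=0$.

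Next I would invoke Theorem 3.1(2): for $C=A/\mathrm{ann}T$ the functors $\mathbf{R}\Hom_A(T,-)$ and $\mathbf{L}(T\otimes_B-)$ are inverse triangle-equivalences between $D^b(\mod C)$ and $D^b(\mod B)$, so $C$ is derived equivalent to $B$. Because the derived dimension is by definition the dimension of the bounded derived category (Definitions 2.3 and 2.4), and the operations $\langle-\rangle$, $*$ and $\diamond$ entering that definition are built solely from shifts, direct sums, summands and triangles, any triangle-equivalence $F$ satisfies $F\langle M\rangle_n=\langle FM\rangle_n$. Hence $\dim$ is invariant under derived equivalence, giving $\mathrm{der.dim}(C)=\mathrm{der.dim}(B)$.

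Finally I would apply Proposition 2.5 to $A$ with the nilpotent ideal $I=\mathrm{ann}T$, for which $I^r=0$, obtaining $\mathrm{der.dim}(A)\leqslant r(1+\mathrm{der.dim}(A/\mathrm{ann}T))-1=r(1+\mathrm{der.dim}(C))-1$, and then substituting $\mathrm{der.dim}(C)=\mathrm{der.dim}(B)$ to reach the claimed bound. The arithmetic is immediate once the ingredients are assembled; the step that demands actual care is the nilpotency of $\mathrm{ann}T$, since both the invocation of Proposition 2.5 and the meaning of $r$ in the statement collapse unless $\mathrm{ann}T$ is nilpotent. The derived-invariance of the dimension, though it is conceptually the core of the reduction, is routine from the definition.
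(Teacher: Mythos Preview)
Your proof is correct and follows essentially the same route as the paper: establish that $\mathrm{ann}T$ is nilpotent via sincerity (the paper simply cites [3] for this, whereas you spell out the containment $\mathrm{ann}T\subseteq\rad A$), use Theorem~3.1(2) to identify $\mathrm{der.dim}(B)$ with $\mathrm{der.dim}(A/\mathrm{ann}T)$, and then apply Proposition~2.5. The only difference is the level of detail you provide on nilpotency and on the derived-invariance of dimension, both of which the paper leaves implicit.
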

\begin{proof}
Since $T$ is a $\tau$-tilting module, by Proposition 2.2(1) $T$ is sincere. It follws from [3] that $\mathrm{ann}T$ is a nilpotent ideal. By Theorem 3.1(2), $\mathrm{der.dim}(B)=\mathrm{der.dim}(A/\mathrm{ann}T)$. Applying Proposition 2.5, we have $$\mathrm{der.dim}(A)\leqslant r(1+\mathrm{der.dim}(A/\mathrm{ann}T))-1$$.
\end{proof}
\begin{remark}
\begin{itemize}
\item[(1)] It is obvious that the equality holds if $T$ is a tilting module.
\item[(2)] The derived dimension of $A$ can be strictly less than $r(1+\mathrm{der.dim}(B))-1$. For example, let $A$ be an iterated tilted algebra of Dynkin type and $T$ a $\tau$-tilting module which is not tilting.
\end{itemize}
\end{remark}
In fact, the bound of Theorem 3.2 is sharp, even $T$ is $\tau$-tilting which is not tilting. We illustrate this on a simple but non trivial example.
\begin{example}
Let $A$ be an radical square zero algebra with the following quiver Q:
$$\xymatrix{
1 \ar[r]^{\alpha }&
2 \ar@(ur,dr)^{\beta }&
}$$
Observe that $\mathrm{der.dim}(A)=1$. The support $\tau$-tilting quiver of $A$ is the following:\\
$$\begin{xy}
(0,0)*+{\begin{smallmatrix} 1\\ 2\end{smallmatrix} \begin{smallmatrix} 2\\2\end{smallmatrix}}="1",
(0,-20)*+{\begin{smallmatrix} 2\\2\end{smallmatrix}}="2",
(20,0)*+{\begin{smallmatrix}\color{red}{1}\\ \color{red}{2}\end{smallmatrix} \begin{smallmatrix} \color{red}{1}\end{smallmatrix}}="3",
(40,0)*+{\begin{smallmatrix} 1\end{smallmatrix}}="4",
(40,-20)*+{\begin{smallmatrix} 0\end{smallmatrix}}="5",
\ar"1";"2",\ar"1";"3",\ar"3";"4",\ar"2";"5",
\ar"4";"5",
\end{xy}$$
Consider $T={\begin{smallmatrix}1\\ 2\end{smallmatrix} \begin{smallmatrix} 1\end{smallmatrix}}$ a unique $\tau$-tilting $A$-module which is not tilting. In this case, $\mathrm{ann}T=\langle \beta\rangle $ and $(\mathrm{ann}T)^{2}=0$, i.e., $r=2$.

The endomorphism algebra $B=\End_{A}(T)$ is the hereditary algebra with quiver $1\longleftarrow 2$. Then $\mathrm{der.dim}(B)=0$.

Therefore $1=\mathrm{der.dim}(A)\leqslant r(1+\mathrm{der.dim}(B))-1=1$.
\end{example}

\bigskip
{\bf Acknowledgement.} The author would like to thank Xiaowu Chen, Junling Zheng and Yu Zhou for their helpful discussions. The author also thanks the referee for very useful suggestions concerning the presentation of this paper.
\bigskip

\end{document}